\documentclass[12pt,reqno]{amsart}
\usepackage{amssymb}
\usepackage{}
\usepackage{amsfonts}
\usepackage{mathrsfs}
\usepackage{amsmath,amsthm,amssymb,amsfonts,amscd}
\usepackage{mathrsfs}
\usepackage{bbding}
\usepackage{graphicx,latexsym}
\usepackage{etoolbox}

\makeatletter
\patchcmd{\@settitle}{\uppercasenonmath\@title}{}{}{}
\patchcmd{\@setauthors}{\MakeUppercase}{}{}{}
\makeatother

\theoremstyle{plain}
\newtheorem{theorem}{Theorem}[section]

\newtheorem{lemma}{Lemma}[section]
\newtheorem{proposition}[lemma]{Proposition}

\theoremstyle{definition}

\newcommand{\N}{\mathbb N}

\newcommand{\E}{\mathbb E}
\newcommand{\Prob}{\mathbb P}
\newcommand{\ind}{\mathbf 1}
\DeclareMathOperator{\area}{area}

\begin{document}
\medskip

\title[{A converse to the Erd\H{o}s-Fuchs theorem}]{A converse to the Erd\H{o}s-Fuchs theorem}

\author{Quan-Hui Yang}
\address[Quan-Hui Yang]{Ministry of Education Key Laboratory for NSLSCS, School of Mathematical Sciences, Nanjing Normal University, Nanjing 210023, China}
\email{yangquanhui01@163.com}
\author{Lilu Zhao}
\address[Lilu Zhao]
{School of Mathematical Sciences, University of Science and Technology of China, Hefei 230026, China}
\email{zhaolilu@ustc.edu.cn}

\begin{abstract}We prove that there exists $A\subseteq \N$ such that
\[
 R_A(N)=\frac{\pi}{4}N+
 O\!\left(N^{1/4}\sqrt{\log N}\right),
\]
where
$R_A(N)=\#\{(a,b)\in A^2:a+b\le N\}$. 
This improves the record
$O(N^{1/4}\log N)$ obtained by Ruzsa in 1997.
\end{abstract}
\thanks{2020 {\it Mathematics Subject Classification}. 05D40, 11B13, 11B34
\newline\indent {\it Keywords}. Erd\H{o}s-Fuchs theorem, additive representation function, probabilistic method.
}

\maketitle

\section{Introduction}
\label{sec:intro}

\setcounter{equation}{0}

For an infinite set $A\subseteq\N:=\{0,1,2,\ldots\}$, define
\begin{equation*}
 r_A(n)=\#\{(a,b)\in A\times A:a+b=n\}
\end{equation*}and
\begin{equation}\label{eq:RN}
 R_A(N)=\sum_{n=0}^{N}r_A(n)
       =\#\{(a,b)\in A\times A:a+b\le N\}.
\end{equation}

Erd\H{o}s and Fuchs \cite{EF1956} proved that, for every infinite $A$ and
every $c>0$, the relation
\begin{equation}\label{eq:EF}
 R_A(N)=cN+o\!\left(N^{1/4}(\log N)^{-1/2}\right)
\end{equation}
is impossible. The logarithmic factor in \eqref{eq:EF} was removed by Jurkat (unpublished) and
Montgomery--Vaughan \cite{MV1990} independently. Therefore,
\begin{equation}\label{J-MV}
 R_A(N)=cN+o(N^{1/4})\ \textrm{is impossible. }
\end{equation}

In the opposite direction, Ruzsa \cite{Ruzsa1997} established the existence of an
infinite set $A\subseteq \N$ satisfying
\begin{equation}\label{Ruzsa}
 R_A(N)=\frac{\pi}{4}N+O(N^{1/4}\log N).
\end{equation}
The purpose of this paper is to improve the above result of Ruzsa.

\begin{theorem}\label{thm:main}There exists an infinite subset $A$ in $\N$ such that
\begin{equation}\label{eq:main}
 R_A(N)=\frac{\pi}{4}N+
 O\!\left(N^{1/4}\sqrt{\log N}\right).
\end{equation}
\end{theorem}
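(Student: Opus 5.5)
We follow Ruzsa's probabilistic construction and sharpen its concentration step. Choose independent events $n\in A$ with probabilities $p_n$, where $p_n$ is defined so that the "expected" representation count matches the target. Concretely, take a smooth model density $f(x)=\tfrac12 x^{-1/2}$. Then
\[
\int\!\!\int_{x+y\le N} f(x)f(y)\,dx\,dy=\frac{\pi}{4}N,
\]
since with $x=u^2$, $y=v^2$ the integral becomes the area of the quarter disc $u^2+v^2\le N$. We set $p_n\approx f(n)$, with $p_0=1$ and $p_n=\sqrt{n}-\sqrt{n-1}$ or a similar choice.

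Write $R_A(N)=\sum_{a+b\le N}\xi_a\xi_b$, where $\xi_n=\ind_{n\in A}$. The first step is to compute
\[
\E R_A(N)=\sum_{a+b\le N,\,a\ne b}p_ap_b+\sum_{2a\le N}p_a .
\]
The diagonal term is $O(\sqrt N)$, which is too large. We will therefore subtract it off by a deterministic adjustment of the $p_n$: perturb $p_n$ by a term of size $n^{-1}\cdot$const, or equivalently note that the diagonal contributes a main term $c\sqrt N$ and absorb it by choosing $\sum_{n\le x}p_n=\sqrt x-c'$ with an appropriate constant. The main term of $\sum p_ap_b$ is then a lattice-point count in the disc, whose error is at most $O(N^{1/3})$. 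This is not good enough, so instead we make the model exact. The cleanest route, which I would adopt from Ruzsa, is to let $\xi_n$ be defined through the lattice points themselves. For example, define $A$ by $n\in A$ iff $\lfloor\sqrt{n+\theta_n}\rfloor\ne\lfloor\sqrt{n-1+\theta_{n-1}}\rfloor$ with random shifts, or take $A=\{\lfloor (k+\theta_k)^2\rfloor\}$ with i.i.d. uniform $\theta_k\in[0,1)$. With this choice $R_A(N)$ counts pairs $(j,k)$ with $\lfloor(j+\theta_j)^2\rfloor+\lfloor(k+\theta_k)^2\rfloor\le N$, and its expectation equals $\frac\pi4N+O(N^{1/4})$. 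The random smoothing kills the lattice-point error, because averaging over shifts replaces the count of lattice points with a smooth area.

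The second step is concentration. Write
\[
R_A(N)-\E R_A(N)=2\sum_k Y_k+Q,
\]
where $Y_k$ is the linear (Hoeffding/Hájek) projection onto $\theta_k$ and $Q$ is the degenerate second-order part. For fixed $k\le\sqrt N$, the term $Y_k$ is the conditional fluctuation of the number of $j$ with $(j+\theta_j)^2\le N-(k+\theta_k)^2$. It depends on $\theta_k$ only through a boundary shift of bounded length, so $|Y_k|=O(1)$. Thus $\sum_k Y_k$ is a sum of $O(\sqrt N)$ independent bounded centered variables, and Hoeffding gives the tail bound
\[
\Prob\bigl(\bigl|\textstyle\sum_k Y_k\bigr|>\lambda N^{1/4}\bigr)\le 2e^{-c\lambda^2}.
\]
Taking $\lambda=C\sqrt{\log N}$ gives failure probability $N^{-2}$. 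The quadratic part $Q$ counts boundary pairs near the circle, and its variance should be $O(N^{1/4})$ or so, i.e. of lower order. To bound $Q$ with the same high probability we apply a decoupling and a bounded-differences (McDiarmid) or Kim–Vu polynomial concentration. This is the main obstacle: Ruzsa's loss of $\sqrt{\log N}$ came from treating the whole sum with a cruder inequality. Here each variable $\theta_k$ can change $R_A(N)$ by up to $O(1)$ only for indices near the boundary, while the total number of affected pairs is controlled. I would first try martingale concentration (Freedman's inequality) on the exposure martingale in $k$. Its predictable quadratic variation is $\sum_k \mathrm{Var}(\Delta_k\mid\mathcal F_{k-1})$, which we will show is $O(\sqrt N)$ with probability $1-N^{-2}$ via a separate bound on the random local counts.

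Finally we pass from integers $N$ to all $N$ at once. Take a union bound over all $N\le X$: the failure probabilities sum, since $\sum_N N^{-2}<\infty$, so Borel–Cantelli gives the bound for all large $N$ almost surely. Any finitely many exceptions are absorbed into the $O$-constant, which yields \eqref{eq:main}.
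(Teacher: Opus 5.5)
There is a genuine gap, and it sits in the step you treat as routine: the claim $|Y_k|=O(1)$ is false for Ruzsa's model $x_k=k+\theta_k$ with independent $\theta_k$. Put $f(x)=\sqrt{(N-x^2)_+}$. Since $\E\#\{j\neq k: x_j\le y\}=y+O(1)$, the projection $\E[S(N)\mid\theta_k]-\E S(N)$ equals $2\bigl(f(x_k)-\E f(x_k)\bigr)+O(1)$. As $\theta_k$ runs over $(0,1)$, the boundary point $f(x_k)$ moves by about $|f'(k)|\asymp(\sqrt N/\delta_k)^{1/2}$, where $\delta_k=\sqrt N-k$. This is of order $N^{1/4}$ for $k$ near $\sqrt N$, so it is not a bounded shift.

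Hence the linear part has variance $\asymp\sqrt N\sum_{1\le\delta\le\sqrt N}\delta^{-1}\asymp\sqrt N\log N$. Because it is orthogonal to the degenerate part $Q$, this is also a lower bound for $\mathrm{Var}\,S(N)$ itself. For the same reason, the predictable quadratic variation in your Freedman approach is at least of order $\sqrt N\log N$, not $O(\sqrt N)$. So in this model the fluctuation at a single $N$ is already typically of size $N^{1/4}\sqrt{\log N}$. Hoeffding then gives only $\exp\bigl(-ct^2/(\sqrt N\log N)\bigr)$, and your union bound plus Borel--Cantelli forces $t\asymp N^{1/4}\log N$, which is exactly Ruzsa's theorem. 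Ruzsa's extra logarithm is therefore not an artifact of a crude treatment of the quadratic part. It is built into the independent-shift construction, and no sharper tool for $Q$ (decoupling, Kim--Vu, Freedman) can remove it.

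What is missing is a change of construction that cancels the first-order influence of each random variable. The paper couples consecutive points antithetically: $x_{2k}=2k+U_k$ and $x_{2k+1}=2k+2-U_k$, with $U_k$ i.i.d.\ uniform on $(0,1)$. Each $x_j$ is still uniform on $(j,j+1)$ and distinct pairs are independent, so $\E S(N)=\frac{\pi}{4}N+O(1)$ is unchanged. But now, up to $O(1)$, $S(N)$ depends on $U_k$ only through $2G_{k,N}(u)$, where $G_{k,N}(u)=f(2k+u)+f(2k+2-u)$. The derivative of $G_{k,N}$ is a \emph{difference} of two values of $f'$, so its oscillation is governed by $f''$. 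It is at most $N^{1/4}\delta_k^{-3/2}$ with $\delta_k=\sqrt N-(2k+2)$, and $O(N^{1/4})$ for the $O(1)$ pairs at the edge.

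The bounded-difference constants then satisfy $\sum_k c_k^2=O(\sqrt N)$ instead of $\asymp\sqrt N\log N$. McDiarmid gives $\Prob\bigl(|S(N)-\E S(N)|>t\bigr)\le2\exp\bigl(-2t^2/(C_0\sqrt N)\bigr)$, and the choice $t\asymp N^{1/4}\sqrt{\log N}$ makes the failure probabilities summable. Finally, $S(N)\le R_A(N)\le S(N+2)$ for $A=\{\lfloor x_j^2\rfloor\}$ finishes the proof. Once the construction is right, no Hoeffding decomposition is needed.
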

Following Ruzsa \cite{Ruzsa1997}, we employ the probabilistic method to establish the existence of the desired set $A$. For each natural number $i$, Ruzsa defined a random variable uniformly distributed in the interval $(i,i+1)$. The new ingredient in this paper lies in introducing one random variable on every interval $(2i,2i+2)$, and this saves a factor $\sqrt{\log N}$ in the bounded differences estimate (see Lemma \ref{lem:square-sum}). 

The conclusion \eqref{J-MV} of Jurkat and
Montgomery--Vaughan is equivalent to
\[
 \limsup_{N\to\infty}\frac{|R_A(N)-cN|}{N^{1/4}}>0.
\]
The order $N^{1/4}$ in the above is still best to date. However, Chen and Tang \cite{CT2011} obtained an explicit lower bound 
\begin{equation}\label{eq:CTconstant}
 \limsup_{N\to\infty}
 \frac{|R_A(N)-cN|}{(cN)^{1/4}}
 \ge \frac{4}{9(25\pi)^{1/4}} .
\end{equation}
For the generalization of Ruzsa's \eqref{Ruzsa}, one may refer to the work of Dai and Pan \cite{DP2014}.

\section{Preliminaries}

\setcounter{equation}{0}

We use the following bounded differences
inequality of McDiarmid \cite{McD1989}.
\begin{lemma}[Bounded differences inequality]\label{lem:BD}
Let $V_1,\ldots,V_m$ be independent random variables, and let
$Z=F(V_1,\ldots,V_m)$ be a bounded measurable real-valued function.
Suppose that changing only the $j$th coordinate can change $F$ by at
most $c_j$, uniformly over the other coordinates. Set
$B=\sum_{j=1}^{m}c_j^2$. If $B>0$, then
\begin{equation*}
 \Prob(|Z-\E Z|>t)\le2\exp(-2t^2/B)\qquad(t>0).
\end{equation*}
\end{lemma}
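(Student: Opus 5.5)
We prove the inequality by the standard martingale method: the Doob martingale of $Z$, Hoeffding's lemma for each increment, and a Chernoff bound. Let $\mathcal F_k=\sigma(V_1,\ldots,V_k)$ and $Z_k=\E[Z\mid\mathcal F_k]$ for $0\le k\le m$, so that $Z_0=\E Z$ and $Z_m=Z$. Then
\[
Z-\E Z=\sum_{k=1}^m D_k,\qquad D_k=Z_k-Z_{k-1},
\]
and $(D_k)$ is a martingale difference sequence. Since $F$ is bounded, all conditional expectations exist. By independence, $Z_k=g_k(V_1,\ldots,V_k)$, where $g_k(v_1,\ldots,v_k)=\E F(v_1,\ldots,v_k,V_{k+1},\ldots,V_m)$ is integration against the product law of the remaining coordinates.

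\textbf{Step 1 (conditional range of $D_k$).} Fix $v_1,\ldots,v_{k-1}$ and set
\[
L_k=\inf_x g_k(v_1,\ldots,v_{k-1},x)-Z_{k-1},\qquad U_k=\sup_x g_k(v_1,\ldots,v_{k-1},x)-Z_{k-1}.
\]
Then $L_k\le D_k\le U_k$, and both $L_k$ and $U_k$ are $\mathcal F_{k-1}$-measurable. For any $x,x'$, the difference $g_k(\ldots,x)-g_k(\ldots,x')$ is the integral of $F(\ldots,x,\ldots)-F(\ldots,x',\ldots)$ against the same product measure on $V_{k+1},\ldots,V_m$. This is where independence is used. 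The integrand has absolute value at most $c_k$ by hypothesis, so $U_k-L_k\le c_k$. If one wants to avoid measurability issues with the pointwise $\sup$ and $\inf$, it suffices to note that $D_k$ takes values, conditionally on $\mathcal F_{k-1}$, in some interval of length $c_k$; Hoeffding's lemma only needs this.

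\textbf{Step 2 (Hoeffding's lemma).} If $X$ has mean $0$ and $a\le X\le b$, then $\E e^{\lambda X}\le e^{\lambda^2(b-a)^2/8}$. To prove this, use convexity to get $e^{\lambda X}\le\frac{b-X}{b-a}e^{\lambda a}+\frac{X-a}{b-a}e^{\lambda b}$ and take expectations. Write the logarithm of the resulting bound as $\varphi(u)$ with $u=\lambda(b-a)$. One checks that $\varphi(0)=\varphi'(0)=0$ and that $\varphi''(u)\le 1/4$, since $\varphi''$ is the variance of a two-point distribution. Taylor's theorem then gives the claim. Applied conditionally, this yields $\E[e^{\lambda D_k}\mid\mathcal F_{k-1}]\le e^{\lambda^2c_k^2/8}$.

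\textbf{Step 3 (iteration and Chernoff bound).} Peel off one conditional expectation at a time, from $k=m$ down to $k=1$. This gives
\[
\E e^{\lambda(Z-\E Z)}\le e^{\lambda^2B/8}\qquad(\lambda>0).
\]
By Markov's inequality, $\Prob(Z-\E Z>t)\le e^{-\lambda t+\lambda^2B/8}$. Choosing $\lambda=4t/B$ gives $e^{-2t^2/B}$. Applying the same bound to $-F$, which satisfies the same hypotheses, controls the lower tail, and a union bound gives the factor $2$.

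The only step with real content is Step 1: the width bound $U_k-L_k\le c_k$ depends on independence, through the identification of $Z_k$ with an integral over a product measure. The remaining steps are routine calculus and the Chernoff bound.
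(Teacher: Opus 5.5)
Your proof is correct. The paper does not prove this lemma and only cites McDiarmid; your argument (Doob martingale, conditional range of width at most $c_k$ via independence and Fubini, Hoeffding's lemma, Chernoff with $\lambda=4t/B$, then $-F$ and a union bound) is the standard proof from that source, and your remark about measurability is sound. Hoeffding's lemma only needs, for each fixed $(v_1,\dots,v_{k-1})$, that the mean-zero variable $g_k(v_1,\dots,v_{k-1},V_k)-g_{k-1}(v_1,\dots,v_{k-1})$ lie in some interval of length $c_k$, so the pointwise sup and inf never need to be measurable.
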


Let $U_0,U_1,\ldots$ be independent random variables, each uniformly
distributed on $(0,1)$, and set
\begin{equation*}
 x_{2k}=2k+U_k,\qquad x_{2k+1}=2k+2-U_k.
\end{equation*}
Each $x_j$ is uniform in $(j,j+1)$. We say $x_{2k},x_{2k+1}$ is a pair, and different pairs are independent. A pair satisfies
$$x_{2k}+x_{2k+1}=4k+2.$$

For $N\ge0$, write
\begin{equation}\label{eq:SN}
 S(N)=\#\{(i,j)\in\N^2:x_i^2+x_j^2\le N\}.
\end{equation}
For fixed $N$, only finitely many indices contribute to $S(N)$.
Indeed, $x_i^2+x_j^2\le N$ implies $i<\sqrt N$ and $j<\sqrt N$. 
All expectation calculations below therefore involve finite sums.
We first establish the expectation of $S(N)$.
\begin{lemma}\label{lem:mean}
For every $N\ge0$, one has
\begin{equation}\label{eq:mean}
 \left|\E S(N)-\frac{\pi}{4}N\right|\le4.
\end{equation}
\end{lemma}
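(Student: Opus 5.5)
The plan is to write $\E S(N)$ as a sum over index pairs $(i,j)$ and compare each term with the area of the corresponding unit square inside the quarter disc. By linearity,
\[
 \E S(N)=\sum_{i,j\ge0}\Prob\bigl(x_i^2+x_j^2\le N\bigr),
\]
a finite sum. Let $D_N=\{(x,y)\in[0,\infty)^2:x^2+y^2\le N\}$, so $\area(D_N)=\frac{\pi}{4}N$. Let $Q_{ij}=[i,i+1]\times[j,j+1]$. Since the $Q_{ij}$ tile $[0,\infty)^2$ up to measure zero,
\[
 \frac{\pi}{4}N=\sum_{i,j}\area(D_N\cap Q_{ij}).
\]

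\emph{Generic pairs.} If $x_i$ and $x_j$ belong to different pairs, they are independent, and each is uniform on a unit interval. Hence $(x_i,x_j)$ is uniform on $Q_{ij}$, and $\Prob(x_i^2+x_j^2\le N)=\area(D_N\cap Q_{ij})$ exactly. So the only discrepancies come from pairs $(i,j)$ with $i,j\in\{2k,2k+1\}$ for the same $k$. For each $k$, these four index pairs make up the block $B_k=[2k,2k+2]^2$. Writing
\[
 P_k=\sum_{i,j\in\{2k,2k+1\}}\Prob\bigl(x_i^2+x_j^2\le N\bigr),\qquad A_k=\area(D_N\cap B_k),
\]
we get
\[
 \E S(N)-\frac{\pi}{4}N=\sum_k (P_k-A_k).
\]

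\emph{Blocks where both quantities agree.} Both $P_k$ and $A_k$ lie in $[0,4]$. If every point of $B_k$ satisfies $x^2+y^2\le N$, then every relevant $(x_i,x_j)$, which lies in $B_k$ almost surely, is in $D_N$. In that case $P_k=4=A_k$. This happens exactly when $2(2k+2)^2\le N$. Symmetrically, if $2(2k)^2\ge N$, then almost surely no point of $B_k$ is in $D_N$ (boundary has measure zero), so $P_k=0=A_k$. Therefore only blocks with
\[
 2k<\sqrt{N/2}<2k+2
\]
contribute a nonzero term.

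\emph{Uniqueness of the contributing block.} At most one integer $k$ satisfies $2k<\sqrt{N/2}<2k+2$. Indeed, the open intervals $(2k,2k+2)$ are disjoint. If $\sqrt{N/2}$ is an even integer, no $k$ qualifies. For the single possible block, $|P_k-A_k|\le 4$ because both lie in $[0,4]$. This gives \eqref{eq:mean}.

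\emph{Main obstacle.} The delicate step is justifying exactly which blocks can contribute. One needs to check that on $B_k$ the extreme values of $x^2+y^2$ are attained at the corners $(2k,2k)$ and $(2k+2,2k+2)$, which holds since the function is increasing in each coordinate. The zero-measure boundary cases must also be handled correctly. The dependence inside a pair, via $x_{2k}+x_{2k+1}=4k+2$, never needs to be computed explicitly, because the crude bound $|P_k-A_k|\le4$ suffices.
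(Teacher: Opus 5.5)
Your proof is correct and follows the paper's argument: by independence, index pairs drawn from different $U$-pairs contribute exactly the area of their unit square, while on each same-pair block $[2k,2k+2]^2$ both the expected count and the area lie in $[0,4]$. These two quantities coincide unless $8k^2<N<8(k+1)^2$, which holds for at most one $k$. You also correctly placed the diagonal terms $i=j$ inside the same-pair blocks, which is needed because $(x_i,x_i)$ is not uniformly distributed on a square.
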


\begin{proof}
Let
\[
 D_N=\{(x,y)\in[0,\infty)^2:x^2+y^2\le N\}
\]
and
\[
 d_{ij}=\area\bigl(D_N\cap([i,i+1]\times[j,j+1])\bigr).
\]
The area of the boundary is
zero, and hence
\begin{equation*}
 \sum_{i,j\ge0}d_{ij}=\area(D_N).
\end{equation*}
If $x_i,x_j$ lie in different pairs, then the independence gives
\begin{equation*}
 \E\ind_{\{x_i^2+x_j^2\le N\}}=d_{ij}.
\end{equation*}

If $x_i,x_j$ lie in the same pair, then $x_i,x_j\in Q_k$ for some $k$, where
\[
 Q_k=[2k,2k+2]\times[2k,2k+2].
\]
We define
\begin{equation}\label{eq:defineTk}
 T_k(N)=\sum_{i,j\in\{2k,2k+1\}}
       \ind_{\{x_i^2+x_j^2\le N\}}
       \end{equation}
       and its corresponding area contribution
\[
 v_k(N)=\area(D_N\cap Q_k)=\sum_{i,j\in\{2k,2k+1\}}d_{i,j}.
\]
Both $\E T_k(N)$ and $v_k(N)$ belong to the interval $[0,4]$.

If $Q_k$ is entirely outside $D_N$, that is $N\le8k^2$, then $\E T_k(N)=v_k(N)=0$. If $Q_k$ is entirely inside $D_N$, that is $N\ge8(k+1)^2$, then $\E T_k(N)=v_k(N)=4$.
In both two cases we have $\E T_k(N)=v_k(N)$. The last is to consider the possible case that $Q_k$ is partially intersected with $D_N$, and in this case we have
\begin{equation*}
 8k^2<N<8(k+1)^2.
\end{equation*}
There is at most one such $k$, and thus
\begin{equation*}
 \left|\E S(N)-D_N\right|\le 4.
\end{equation*}
We complete the proof by noting that $\area(D_N)=\frac{\pi}{4}N$.
\end{proof}

\section{Bounded differences estimates}

\setcounter{equation}{0}

Recall that
\begin{equation*}
 j<x_j<j+1\qquad(j\ge0).
\end{equation*}

\begin{lemma}\label{lem:one-per-interval}Define
\[
 C_k(y)=\#\{j\notin\{2k,2k+1\}:x_j\le y\}.
\]
One has
\begin{equation}\label{eq:Ck}
 |C_k(y)-y|\le3\qquad(y\ge0).
\end{equation}
\end{lemma}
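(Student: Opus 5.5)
The plan is to compare $C_k(y)$ with the full counting function $C(y)=\#\{j\ge0:x_j\le y\}$, which is pinned down deterministically by the constraint $j<x_j<j+1$, and then absorb the removal of the two excluded indices into the constant. Only the interval constraint recalled just before the lemma is needed; independence plays no role.

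\textbf{Step 1: bound $C(y)$.} Fix $y\ge0$ and set $m=\lfloor y\rfloor$.
\begin{itemize}
\item For every $j\le m-1$ we have $x_j<j+1\le m\le y$, so all indices $0,\dots,m-1$ are counted. This gives $C(y)\ge m$.
\item For every $j\ge m+1$ we have $x_j>j\ge m+1>y$, so none of these indices is counted. Only $j=m$ may additionally contribute, giving $C(y)\le m+1$.
\end{itemize}
Hence $m\le C(y)\le m+1$. Since $m\le y<m+1$, it follows that $|C(y)-y|\le 1$.

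\textbf{Step 2: pass to $C_k(y)$.} Write
\[
C_k(y)=C(y)-\#\bigl\{j\in\{2k,2k+1\}:x_j\le y\bigr\}.
\]
The subtracted term lies in $\{0,1,2\}$. Therefore
\[
|C_k(y)-y|\le |C(y)-y|+2\le 3.
\]

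\textbf{Expected difficulty.} There is no real obstacle; the argument is elementary. The only point needing care is the boundary case where $y$ is an integer, where one must check which index can still be counted. Because $j<x_j<j+1$ holds with strict inequalities, the bounds in Step 1 remain valid in that case.
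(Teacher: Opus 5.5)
Your proposal is correct and follows the paper's proof: the paper likewise sets $m=\lfloor y\rfloor$ and uses $j<x_j<j+1$ to show that the full count $\#\{j:x_j\le y\}$ lies in $\{m,m+1\}$. It then notes that removing the two indices $2k,2k+1$ lowers the count by between $0$ and $2$, which gives the bound $3$.
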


\begin{proof}
Let $m=\lfloor y\rfloor$ and $\mathcal{Y}=\{j:x_j\le y\}$. Then $j\in \mathcal{Y}$ for $j\le m-1$ and $j\not\in \mathcal{Y}$ for $j\ge m+1$. Thus, $\#\mathcal{Y}\in \{m,m+1\}$. Note that $0\le \#\mathcal{Y}-C_k(y)\le 2$. This proves \eqref{eq:Ck}.
\end{proof}

We define
\begin{equation*}
 f_N(x)=\sqrt{(N-x^2)_+}\qquad(x\ge0),
\end{equation*}
where $(N-x^2)_+=\max(N-x^2,0)$. Then define
\begin{equation}\label{eq:Gk}
 G_{k,N}(u)=f_N(2k+u)+f_N(2k+2-u),\qquad0<u<1.
\end{equation}
For a bounded real function $G$ on $(0,1)$, write
$$\Delta_G=\sup_{0<u<1}G(u)-\inf_{0<u<1}G(u).$$

\begin{lemma}\label{lem:coordinatechange}
Fix all values of $U_l$ with $l\ne k$. Then changing $U_k$ can change
$S(N)$ by at most
\begin{equation}\label{eq:coordinatebound}
 28+2\Delta_{G_{k,N}}.
\end{equation}
If $2k\ge \sqrt{N}$, then changing $U_k$ does change
$S(N)$.
\end{lemma}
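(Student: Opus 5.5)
The plan is to split $S(N)$ according to how many of the two indices lie in the $k$th pair $\{2k,2k+1\}$, and to isolate the part that depends on $U_k$. Since $x_{2k}$ and $x_{2k+1}$ are the only variables depending on $U_k$, I would write
\[
 S(N)=T_k(N)+2\sum_{i\in\{2k,2k+1\}}\#\{j\notin\{2k,2k+1\}:x_i^2+x_j^2\le N\}+S'_k(N),
\]
where $T_k(N)$ is as in \eqref{eq:defineTk}. The remainder $S'_k(N)$ counts pairs $(i,j)$ with neither index in $\{2k,2k+1\}$. The factor $2$ accounts for the symmetry $(i,j)\leftrightarrow(j,i)$ of the mixed terms.

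With all $U_l$, $l\ne k$, fixed, the term $S'_k(N)$ does not depend on $U_k$ at all. The term $T_k(N)$ lies in $[0,4]$, so it can change by at most $4$.

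For the mixed terms, I note that for $x_i\ge0$ the condition $x_i^2+x_j^2\le N$ is equivalent to $x_j\le f_N(x_i)$. This holds also when $x_i^2>N$: then $f_N(x_i)=0$, and no $x_j$ satisfies $x_j\le0$, since $x_j>j\ge0$. Hence the inner count equals $C_k(f_N(x_i))$. The function $C_k$ depends only on the fixed variables $U_l$, $l\ne k$.

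By Lemma \ref{lem:one-per-interval}, $C_k(f_N(x_i))=f_N(x_i)+\theta_i$ with $|\theta_i|\le3$. Summing over $i\in\{2k,2k+1\}$ and using $x_{2k}=2k+U_k$ and $x_{2k+1}=2k+2-U_k$ gives
\[
 2\sum_{i\in\{2k,2k+1\}}C_k(f_N(x_i))=2G_{k,N}(U_k)+\theta,\qquad|\theta|\le12.
\]
Comparing two values of $U_k$, this part changes by at most $2\Delta_{G_{k,N}}+24$. Adding the contribution $4$ from $T_k$ yields the bound $28+2\Delta_{G_{k,N}}$ in \eqref{eq:coordinatebound}.

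The second assertion should read that changing $U_k$ does \emph{not} change $S(N)$ when $2k\ge\sqrt N$. To see this, note that $x_{2k+1}>x_{2k}>2k\ge\sqrt N$, so $x_i^2>N$ for both $i$ in the pair. Therefore no pair $(i,j)$ involving these indices is counted, and $S(N)=S'_k(N)$, which is independent of $U_k$.

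The only delicate point in the argument is making sure that Lemma \ref{lem:one-per-interval} applies uniformly with the other coordinates frozen. This works because $C_k$ excludes exactly the pair being varied, and the case $f_N=0$ is handled as above.
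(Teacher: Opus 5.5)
Your proof is correct and follows the paper's argument essentially step for step. You use the same split of $S(N)$ into $T_k(N)$, the mixed terms $2C_k(f_N(x_{2k}))+2C_k(f_N(x_{2k+1}))$, and a $U_k$-independent remainder, with Lemma~\ref{lem:one-per-interval} giving the error $12$ and hence the bound $24+4+2\Delta_{G_{k,N}}$. You are also right that the second assertion is a typo for ``does not change.'' Your justification via $x_{2k+1}>x_{2k}>2k\ge\sqrt N$, together with your check of the case $f_N(x_i)=0$, makes explicit what the paper leaves implicit.
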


\begin{proof}On writing $x_{2k}=2k+u$ and $x_{2k+1}=2k+2-u$, the ordered index pair $i,j$ having exactly one
index in $\{2k,2k+1\}$ contribute
\begin{equation}\label{eq:Ckappear}
 2C_k\bigl(f_N(2k+u)\bigr)
 +2C_k\bigl(f_N(2k+2-u)\bigr)
\end{equation}
to $S(N)$. The factor $2$ above is due to the symmetry. Thus, we have
$$S(N)=2C_k\bigl(f_N(2k+u)\bigr)
 +2C_k\bigl(f_N(2k+2-u)\bigr)+T_{k}(N)+\textrm{OT},$$
 where $T_k(N)$ is given in \eqref{eq:defineTk} and  OT is the contribution from $i,j\not\in \{2k,2k+1\}$. In particular, OT is independent of $u$.

By \eqref{eq:Ck} and \eqref{eq:Gk}, we obtain that \eqref{eq:Ckappear} has the form
\begin{equation*}
 2G_{k,N}(u)+\varepsilon_k(u),\qquad
 |\varepsilon_k(u)|\le12.
\end{equation*}
Recall that $T_k(N)\in[0,4]$. Then changing $U_k$ can change
$S(N)$ by at most
\[
 2|G_{k,N}(u)-G_{k,N}(v)|
 +|\varepsilon_k(u)-\varepsilon_k(v)|+4
 \le2\Delta_{G_{k,N}}+24+4.
\]
This proves \eqref{eq:coordinatebound}.

If $2k\ge \sqrt{N}$, then $S(N)$ is independent of $U_k$. Thus, changing $U_k$ does change
$S(N)$. The proof of this lemma is complete.
\end{proof}

\begin{lemma}\label{lem:oscillation}
If $2k+2\le \sqrt{N}-1$ and set
$\delta_k:=\sqrt{N}-(2k+2)$, then
\begin{equation}\label{eq:interior}
 \Delta_{G_{k,N}}\le \frac{N^{1/4}}{\delta_k^{3/2}}.
\end{equation}
If $\sqrt{N}-3<2k<\sqrt{N}$, then one has
\begin{equation}\label{eq:endpoint}
 \Delta_{G_{k,N}}\le6\,N^{1/4}.
\end{equation}
\end{lemma}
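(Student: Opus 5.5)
Since $G_{k,N}$ is symmetric under $u\mapsto 1-u$ once we write $2k+2-u = (2k+1)+(1-u)$, the plan is to treat the two cases of the lemma separately.

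\emph{Interior case.} Here $2k+2\le\sqrt N-1$, so for $0<u<1$ both arguments $2k+u$ and $2k+2-u$ lie in $[2k,2k+2]\subset[0,\sqrt N-1]$. On this range $f_N(x)=\sqrt{N-x^2}$ is smooth. The first step is to exploit the pairing. Put $s=2k+1$ and $h=1-u\in(0,1)$, so that
\[
 G_{k,N}(u)=f_N(s-h)+f_N(s+h).
\]
This expression is even in $h$. Its oscillation in $h$ is therefore controlled by the second derivative rather than the first, and this second-order gain is exactly what the pairing buys. By Taylor's theorem,
\[
 \bigl|f_N(s+h)+f_N(s-h)-2f_N(s)\bigr|\le h^2\sup_{|x-s|\le h}|f_N''(x)|,
\]
so $\Delta_{G_{k,N}}\le 2\sup_{x\in[2k,2k+2]}|f_N''(x)|$ (or $\sup|f''|$ alone, with a more careful comparison of the two values). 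Next I compute
\[
 f_N''(x)=-\frac{N}{(N-x^2)^{3/2}}.
\]
For $x\le 2k+2=\sqrt N-\delta_k$ we have $N-x^2=(\sqrt N-x)(\sqrt N+x)\ge \delta_k\sqrt N$. This gives
\[
 |f_N''(x)|\le \frac{N}{\delta_k^{3/2}N^{3/4}}=\frac{N^{1/4}}{\delta_k^{3/2}}.
\]
To remove the factor $2$, I would compare any two values $G(h_1),G(h_2)$ with $0\le h_1<h_2\le1$ via
\[
 \int_{h_1}^{h_2}\bigl(f'(s+t)-f'(s-t)\bigr)\,dt,
\]
and bound the integrand by $2t\sup|f''|$. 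This yields a difference of at most $(h_2^2-h_1^2)\sup|f''|\le\sup|f''|$, as required.

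\emph{Endpoint case.} Here $\sqrt N-3<2k<\sqrt N$, and no smoothness is available because the square root singularity may sit inside the interval. I would instead bound each summand crudely. For any $x\ge 2k>\sqrt N-3$ we have
\[
 0\le f_N(x)\le\sqrt{N-(\sqrt N-3)^2_+}\le\sqrt{6\sqrt N}.
\]
Hence $0\le G_{k,N}\le 2\sqrt6\,N^{1/4}$, and so $\Delta_{G_{k,N}}\le 2\sqrt6\,N^{1/4}<6N^{1/4}$. If $\sqrt N<3$, the bound $(\sqrt N-3)_+=0$ gives $f_N\le\sqrt N\le\sqrt3\,N^{1/4}$, which is also fine.

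The main obstacle is getting the constant exactly $1$ in \eqref{eq:interior}. This requires the symmetric-difference (second-order) argument via the integral above, rather than a direct two-term Taylor bound, which would lose a factor of $2$.
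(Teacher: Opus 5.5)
Your proof is correct and follows the paper's argument essentially step for step: the substitution $h=1-u$, the bound $|f_N''(x)|\le N^{1/4}\delta_k^{-3/2}$ from $N-x^2\ge\delta_k\sqrt N$, bounding $f_N'(s+t)-f_N'(s-t)$ by $2t\sup|f_N''|$ and integrating to get constant $1$, and the crude pointwise bound $f_N\le\sqrt{6\sqrt N}$ in the endpoint case (your total $2\sqrt6\,N^{1/4}$ is actually stated more carefully than the paper's). The only slip is your opening remark that $G_{k,N}$ is symmetric under $u\mapsto 1-u$; this is false, since evenness in $h$ corresponds to $u\mapsto 2-u$, which is outside the domain, but nothing in your argument depends on it.
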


\begin{proof}We write 
$$X=\sqrt{N}.$$
First consider $2k+2\le \sqrt{N}-1$. 
For $x\in[2k,2k+2]$, one has $R-x\ge \delta_k$, $R+x\ge R$ and (due to $0\le x<R$)
\begin{equation*}
 f_N'(x)=-\frac{x}{\sqrt{R^2-x^2}},\qquad
 f_N''(x)=-\frac{R^2}{(R^2-x^2)^{3/2}}.
\end{equation*}
Consequently,
\begin{equation}\label{eq:secondderivative}
 \sup_{x\in[2k,2k+2]}|f_N''(x)|
 \le\frac{R^2}{(\delta_kR)^{3/2}}
 =\frac{\sqrt R}{\delta_k^{3/2}}.
\end{equation}
Write $t=1-u$. Then
\[
 G_{k,N}(u)=g(t):=f_R(2k+1-t)+f_R(2k+1+t),\qquad0<t<1.
\]
The derivative
\[
 g'(t)=f_N'(2k+1+t)-f_N'(2k+1-t)
\]
is a difference rather than a sum.
By the fundamental theorem of calculus and
\eqref{eq:secondderivative}, 
$$|g'(t)|\le2t\frac{\sqrt R}{\delta_k^{3/2}},$$
and  hence
\[
 \Delta_{ G_{k,N}}=\Delta_{g}
 \le\int_0^1|g'(t)|\,dt\le \frac{\sqrt R}{\delta_k^{3/2}},
\]
which is \eqref{eq:interior}.

Now assume $R-3<2k<R$. One has the trivial bound $0\le f_N(x)\le R$, $0\le G_{k,N}(u)\le 2R$ and thus $\Delta_{G_{k,N}}\le 2R$. This proves \eqref{eq:endpoint} when $R\le 3$. Next, we assume that $R>3$. Then
\[
 0\le f_N(x)\le\sqrt{R^2-(R-3)^2}\le\sqrt{6R}\le 3\,\sqrt{R}.
\]
Thus $G_{k,N}$ takes values in $[0,3\,\sqrt{R}]$, proving
\eqref{eq:endpoint}.

The proof of the lemma is complete.
\end{proof}
Now we introduce 
\begin{equation*}
 c_k(N)=
 \begin{cases}
  28+2\,N^{1/4}\,\delta_k^{-3/2},
       &2k+2\le \sqrt{N}-1,\\[2pt]
  28+12\,N^{1/4},&\sqrt{N}-3<2k<\sqrt{N},\\[2pt]
  0,&2k\ge \sqrt{N},
 \end{cases}
\end{equation*}
where 
$\delta_k=\sqrt{N}-(2k+2)$. As an input to apply the bounded differences inequality, we have the following.
\begin{lemma}\label{lem:square-sum}For fixed values of all $U_l$ with $l\ne k$, changing $U_k$ can change
$S(N)$ by at most $c_k(N)$. Moreover,
\begin{equation}\label{eq:square-sum}
 \sum_{k\ge0}c_k(N)^2=O(\sqrt{N}),
\end{equation}
where the $O$-constant is absolute.
\end{lemma}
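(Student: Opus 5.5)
The first assertion follows by combining Lemma \ref{lem:coordinatechange} with Lemma \ref{lem:oscillation}, splitting into the same three ranges of $k$ that appear in the definition of $c_k(N)$.

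\emph{Range $2k\ge\sqrt N$.} The value $S(N)$ does not depend on $U_k$. Indeed, both $x_{2k}$ and $x_{2k+1}$ exceed $2k\ge\sqrt N$, so no ordered pair involving these indices is counted. Hence the change is $0=c_k(N)$. (The last sentence of Lemma \ref{lem:coordinatechange} should be read as ``does not change''.)

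\emph{Range $2k+2\le\sqrt N-1$.} By \eqref{eq:coordinatebound} and \eqref{eq:interior}, the change is at most $28+2N^{1/4}\delta_k^{-3/2}$.

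\emph{Range $\sqrt N-3<2k<\sqrt N$.} By \eqref{eq:coordinatebound} and \eqref{eq:endpoint}, the change is at most $28+12N^{1/4}$.

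These three ranges cover every $k$ with $2k<\sqrt N$. If $2k+2>\sqrt N-1$, then $2k>\sqrt N-3$, so such a $k$ falls in the third range.

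For \eqref{eq:square-sum}, use $(a+b)^2\le 2a^2+2b^2$ and treat the pieces separately.

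\emph{Constant term.} There are at most $\sqrt N/2+1$ indices with $c_k(N)\neq0$, so the constant $28$ contributes $O(\sqrt N+1)$.

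\emph{Third range.} It contains at most two values of $k$, since $2k$ ranges over an interval of length $3$. Each contributes $O(N^{1/2})$.

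\emph{Interior terms.} The main piece is
\[
\sum_{2k+2\le\sqrt N-1}4N^{1/2}\delta_k^{-3}.
\]
As $k$ varies, $\delta_k=\sqrt N-2k-2$ runs over values $\ge1$ spaced by exactly $2$. So the values of $\delta_k$ are $\delta_0',\delta_0'+2,\delta_0'+4,\dots$ for some $\delta_0'\ge1$. Therefore
\[
\sum_k\delta_k^{-3}\le\sum_{m\ge0}(1+2m)^{-3}=O(1),
\]
and the interior sum is $O(\sqrt N)$.

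Adding the three contributions gives $\sum_k c_k(N)^2=O(\sqrt N)$ for $N\ge1$. For $N<1$, every $c_k(N)$ vanishes except possibly $k=0$, which is bounded, so the bound holds trivially with an absolute constant.

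The only step needing care is this last summation. The gain over Ruzsa's argument lies exactly in the exponent $3/2$ on $\delta_k$ in \eqref{eq:interior}: after squaring it becomes $\delta_k^{-3}$, which is summable. A bound of order $N^{1/4}\delta_k^{-1/2}$ would instead give a harmonic sum and hence an extra $\log N$. So I would check carefully that the $\delta_k$ are spaced at least $1$ apart and bounded below by $1$, so that the comparison with $\sum_m(1+2m)^{-3}$ is legitimate.
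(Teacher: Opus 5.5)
Your proof is correct and follows the paper's argument, which just cites Lemmas~\ref{lem:coordinatechange} and~\ref{lem:oscillation} for the first claim and the summability of $\delta_k^{-3}$ for the second; you fill in the range-by-range bookkeeping (correctly reading ``does change'' as ``does not change'') and note that the $\delta_k\ge 1$ are spaced by $2$. One small slip is your closing remark about $N<1$: for $0<N<1$ one has $c_0(N)=28+12N^{1/4}\ge 28$ while $\sqrt N\to 0$, so the bound actually requires $N\ge 1$ (or integer $N$, since the sum vanishes at $N=0$), which is harmless because the paper only applies the lemma for $N\ge 16$.
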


\begin{proof}The first statement follows from
Lemma~\ref{lem:coordinatechange} and Lemma~\ref{lem:oscillation}.

The following elementary inequality
\begin{equation*}
 \sum_{2k+2\le \sqrt{N}-1}\delta_k^{-3}=O(1)
\end{equation*}proves \eqref{eq:square-sum}. This completes the proof.
\end{proof}

\section{Proof of Theorem \ref{thm:main}}

\setcounter{equation}{0}

\begin{lemma}\label{lem:tail}Let $C_0$ be the absolute $O$-constant in \eqref{eq:square-sum}.
For every real $N\ge16$ and every $t>0$,
\begin{equation}\label{eq:area-tail}
 \Prob\!\left(\left|S(N)-\frac{\pi}{4}N\right|>4+t\right)
 \le2\exp\!\left(-\frac{2t^2}{C_0\sqrt N}\right).
\end{equation}
\end{lemma}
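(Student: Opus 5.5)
We combine the mean estimate of Lemma~\ref{lem:mean} with the bounded differences inequality (Lemma~\ref{lem:BD}), using the coordinate bounds of Lemma~\ref{lem:square-sum}.

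\emph{Setting up the finite problem.} Fix a real $N\ge16$. By the remark after \eqref{eq:SN}, $S(N)$ depends only on the pairs with $2k<\sqrt N$, i.e.\ on finitely many variables $U_0,\ldots,U_m$ with $m=\lfloor\sqrt N/2\rfloor$. So we regard $Z=S(N)=F(U_0,\ldots,U_m)$. This is a bounded measurable function of independent variables: it is a finite sum of indicators of Borel sets and lies in $[0,\lceil\sqrt N\rceil^2]$. The variables $U_k$ with $2k\ge\sqrt N$ do not affect $S(N)$; this matches $c_k(N)=0$ there.

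\emph{Applying the inequality.} By Lemma~\ref{lem:square-sum}, changing the coordinate $U_k$ alone changes $Z$ by at most $c_k(N)$, uniformly in the other coordinates. Moreover
\[
B=\sum_k c_k(N)^2\le C_0\sqrt N .
\]
Each $c_k(N)$ with $2k<\sqrt N$ is at least $28$, and $k=0$ qualifies since $N\ge16$, so $B>0$. Lemma~\ref{lem:BD} then gives
\[
\Prob(|Z-\E Z|>t)\le 2\exp(-2t^2/B)\le 2\exp\!\left(-\frac{2t^2}{C_0\sqrt N}\right).
\]
The last step uses that the exponent is increasing in $B$.

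\emph{Recentering.} By Lemma~\ref{lem:mean}, $|\E S(N)-\frac{\pi}{4}N|\le4$. Hence the event $|S(N)-\frac{\pi}{4}N|>4+t$ implies $|S(N)-\E S(N)|>t$ by the triangle inequality, and \eqref{eq:area-tail} follows.

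The only delicate point is the bookkeeping behind these steps. First, the coordinate-change bound must be uniform over all values of the other coordinates, which Lemma~\ref{lem:coordinatechange} provides. Second, $C_0$ must genuinely dominate $\sum_k c_k(N)^2$ for all $N\ge16$, not just asymptotically. If the $O$-constant in \eqref{eq:square-sum} were only valid for large $N$, we would enlarge $C_0$ to cover the finitely many ranges of small $N$. This is possible because $B$ is bounded for $N$ in any bounded range, so the enlarged $C_0$ is still absolute.
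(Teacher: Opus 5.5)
Your proof is correct and follows the paper's argument. Both restrict $S(N)$ to the finitely many variables $U_0,\ldots,U_m$ with $m=\lfloor\sqrt N/2\rfloor$, apply the bounded differences inequality with $B=\sum_k c_k(N)^2\le C_0\sqrt N$, and recentre using Lemma~\ref{lem:mean} and the triangle inequality; your explicit checks that $B>0$ and that the bound is increasing in $B$ are details the paper leaves implicit.
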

\begin{proof} Let 
$$m=\left\lfloor\frac{\sqrt{N}}{2}\right\rfloor.$$
Then $S(N)$ is bounded and depends only
on the independent random variables $U_{0},\ldots,U_{m}$. 
On applying Lemma~\ref{lem:BD} and Lemma~\ref{lem:square-sum}, we obtain 
\begin{equation*}
 \Prob(|S(N)-\E S(N)|>t)
 \le2\exp\!\left(-\frac{2t^2}{C_0\sqrt N}\right).
\end{equation*}
Now we establish \eqref{eq:area-tail} by combining the above and \eqref{eq:mean}.
\end{proof}

\begin{proposition}\label{prop:almost-sure}
With probability one,
\begin{equation}\label{eq:almost-sure-S}
 S(N)=\frac{\pi}{4}N+
 O\!\left(N^{1/4}\sqrt{\log N}\right).
\end{equation}
\end{proposition}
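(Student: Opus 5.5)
We derive the almost sure statement from the tail bound in Lemma~\ref{lem:tail} by a Borel--Cantelli argument over a discrete set of values of $N$, and then pass to all real $N$ using monotonicity of $S(N)$ in $N$.

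\textbf{Integer $N$.} Fix a large constant $K$ and, for each integer $N\ge16$, apply \eqref{eq:area-tail} with $t=t_N:=K N^{1/4}\sqrt{\log N}$. This gives
\[
 \Prob\!\left(\left|S(N)-\tfrac{\pi}{4}N\right|>4+t_N\right)\le 2\exp\!\left(-\frac{2K^2\sqrt N\log N}{C_0\sqrt N}\right)=2N^{-2K^2/C_0}.
\]
Choose $K$ with $2K^2/C_0\ge2$. Then the right-hand side is summable over integers $N\ge16$. By Borel--Cantelli, with probability one only finitely many integers $N$ violate $|S(N)-\frac{\pi}{4}N|\le 4+KN^{1/4}\sqrt{\log N}$. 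Each of the finitely many exceptional $N$ has $S(N)$ finite, since only indices $i,j<\sqrt N$ contribute. Absorbing those finitely many values into the implied constant (which may depend on the sample point) gives \eqref{eq:almost-sure-S} for all integers $N\ge 2$.

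\textbf{Real $N$.} If \eqref{eq:almost-sure-S} is meant for real $N$, we use that $S(N)$ is nondecreasing in $N$. For $n\le N<n+1$ we have $S(n)\le S(N)\le S(n+1)$, while $\frac{\pi}{4}N$ differs from $\frac{\pi}{4}n$ and from $\frac{\pi}{4}(n+1)$ by at most $\pi/4$. The integer bounds at $n$ and $n+1$ therefore sandwich $S(N)$ with the same error order.

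\textbf{Expected obstacle.} The only delicate point is making sure the exponent from Lemma~\ref{lem:tail} makes the probabilities summable. This holds because the variance proxy is $O(\sqrt N)$ by Lemma~\ref{lem:square-sum}, so taking $t\asymp N^{1/4}\sqrt{\log N}$ gives polynomial decay in $N$ with an exponent we can make large. Everything else is routine.
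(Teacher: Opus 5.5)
Your proposal is correct and is essentially the paper's argument: the paper substitutes $t=C_0^{1/2}N^{1/4}\sqrt{\log N}$ (your choice with $K=C_0^{1/2}$) into \eqref{eq:area-tail} to get the summable bound $2/N^2$, and then applies Borel--Cantelli. Your restriction to integer $N$, the absorption of the finitely many exceptional values into the constant, and the monotonicity sandwich for real $N$ just spell out details the paper leaves implicit.
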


\begin{proof}
For $N$ sufficiently large, substituting
$t=C_0^{1/2}N^{1/4}\sqrt{\log N}$ in \eqref{eq:area-tail}, we obtain
\begin{equation*}
 \Prob\!\left(\left|S(N)-\frac{\pi}{4}N\right|>4+C_0^{1/2}N^{1/4}\sqrt{\log N}\right)
 \le\frac{2}{N^2}.
\end{equation*}
By the Borel--Cantelli lemma (see \cite[Lemma 8.6.1]{AS} or \cite[Lemma 1.2]{TV}), with probability one we have \eqref{eq:almost-sure-S} holds.
\end{proof}

\begin{proof}[Proof of Theorem~\ref{thm:main}] 
Choose a realization for which \eqref{eq:almost-sure-S} holds by
Proposition~\ref{prop:almost-sure}. Let
\begin{equation*}
 a_j=\lfloor x_j^2\rfloor.
\end{equation*}
Since $j<x_j<j+1$, we have
$$j^2\le a_j<(j+1)^2\le a_{j+1}.$$
Define 
$$A=\{a_j: j\in \N\}.$$ 

Recall \eqref{eq:RN} and \eqref{eq:SN}. We next compare $R_A(N)$ and $S(N)$. For every ordered index pair $(i,j)$, one has
\[
 a_i+a_j\le x_i^2+x_j^2<a_i+a_j+2.
\]
If $x_i^2+x_j^2\le N$, then $a_i+a_j\le N$.
If $a_i+a_j\le N$, then $x_i^2+x_j^2<N+2$. Thus we have
\begin{equation*}
 S(N)\le R_A(N)\le S(N+2),
\end{equation*}
and combining with \eqref{eq:almost-sure-S} gives \eqref{eq:main}. This completes the proof of Theorem \ref{thm:main}.
\end{proof}

\medskip

\section*{Acknowledgments}
This work is support by the National Key Research and Development Program of China (Grant No. 2021YFA1000701), National Natural Science Foundation of China  (Grant No. 12371005 and 12471088).

\end{document}